\documentclass[11pt,reqno]{amsart}
\usepackage{amscd,amssymb,amsmath,amsthm}
\usepackage{cite}
\topmargin=0.1in \textwidth5.8in \textheight7.8in
\newtheorem{thm}[subsection]{Theorem}

\newtheorem{pro}[subsection]{Proposition}
\newtheorem{cor}[subsection]{Corollary}
\newtheorem{problem}[subsection]{Problem}
\newtheorem{rk}[subsection]{Remark}

\newtheorem{ex}{Example}

\numberwithin{equation}{section} \setcounter{tocdepth}{1}

\newcommand{\bea}{\begin{eqnarray}}
\newcommand{\eea}{\end{eqnarray}}

%%%%%%%%%%%%%%%%%%%%%%%%%%%%%%%%%%%%%%%%%%%%%%%%%%%%%%%%%

%%%%%%%%%%%%%%%%%%%%%%%%%%%%%%%%%%%%%%%%%%%%%%%%%%%%%%

\DeclareMathOperator{\Der}{\rm Der}
%\DeclareMathOperator{\Hom}{Hom} \DeclareMathOperator{\rank}{rank}
%\DeclareMathOperator{\im}{im} \DeclareMathOperator{\coker}{coker}
%\DeclareMathOperator{\deg}{deg}
%\DeclareMathOperator{\id}{id} \DeclareMathOperator{\mult}{mult}
%\DeclareMathOperator{\supp}{supp}
%\DeclareMathOperator{\genus}{genus}
%\DeclareMathOperator{\min}{min}

%\doublespacing

\def\g{\gamma}

\def\xb{{\mathbf{x}}}

\def\yb{{\mathbf{y}}}

%\def\hb{{\mathbf{y}}}

%%%%%%%%%%%%%%%%%%%%%%%%%%%%%%

\begin{document}

\begin{center}
{\Large {\bf On $\exp(Der(E))$ of nilpotent evolution
algebras}}\\[1cm]

{\sc Farrukh Mukhamedov}$^{1,2,*}$ \\[2mm]

$^1$ Department of Mathematical Sciences, College of Science, \\
 United Arab Emirates University 15551, Al-Ain,\\
 Abu Dhabi, United
Arab Emirates\\

e-mail: {\tt far75m@gmail.com; farrukh.m@uaeu.ac.ae}

$^*$ Corresponding author.\\[3mm]

{\sc Otabek Khakimov}$^{2,3}$ \\[2mm]

$^2$ Institute of Mathematics named after V.I.Romanovski, 4,\\
University str., 100125, Tashkent\\
$^3$ AKFA University, 1st Deadlock 10,\\
 Kukcha Darvoza, 100095 Tashkent, Uzbekistan \\

e-mail: {{\tt hakimovo@mail.ru};  {\tt o.khakimov@mathinst.uz}}\\[2mm]

{\sc  Izzat Qaralleh }$^{4}$ \\[2mm]

$^4$Department of Mathematics\\
Faculty of Science, Tafila Technical University\\
P.O. Box, 179, Tafila, Jordan\\

e-mail: {\tt izzat\_math@yahoo.com}\\[2mm]
\end{center}

\small
\begin{center}
{\bf Abstract}\\
\end{center}
In the present paper, every evolution algebra is endowed with Banach algebra norm. This together with the description of derivations and automorphisms of nilpotent evolution algebras, allows to investigated the set $\exp(Der(E))$. Moreover, it is proved that $\exp(Der(E))$ is  a normal subgroup of $Aut(E)$, and its corresponding index is calculated.
 \vskip 0.3cm \noindent {\it
Mathematics Subject Classification}: 17A60, 17A36, 17D92, 47B39.\\
{\it Key words}: evolution algebra;

\normalsize

\section{Introduction}

Many papers were devoted in the algebraic formulation of Mendel's
laws in terms of non-associative algebras (see, for example
\cite{E,lu,Reed,2}). On the other hand, certain genetic phenomena
such as, for example, the case of incomplete dominance, systems of
multiple alleles, and asexual inheritance, do not follow
 Mendel's laws, therefore, in \cite{tv} it has introduced a new type of evolution algebra which was partly an attempt to
 study such non-Mendelian behavior.
The study of evolution algebras constitutes a new subject both in
algebra and the theory of dynamical systems. There are many
related open problems to promote further research in this subject
(for more details we refer to \cite{t}).

We notice that evolution algebras are not defined by identities, and therefore they do not form a variety of non-associative
algebras, like Lie, Jordan or alternative algebras. Hence, the investigation of such kind of algebras needs
a different approach (see \cite{Some_properties,derevol,rozomir}).
 A classification of low dimensional evolution
algebras have been carried out in\cite{3dim,heg1,heg2,Elduque,MKQ20,MQ21,ORV,QM21}.
Evolution algebras found their applications in models of non-Mendelian genetics
laws \cite{BBV,10,FFN17,RV19}. Moreover, these algebras are tightly connected with group theory, the theory of knots, dynamic systems, Markov processes and graph theory \cite{BMV,CRR19, CRR20,CNT}.
Evolution algebras allowed introduce useful algebraic techniques and methods into the investigation of some digraphs because such kind of algebras and weighted digraphs can be canonically identified \cite{Elduque,tv}
However, a full classification of nilpotent evolution algebras is
far from its solution. For review on recent development on evolution algebras, we refer the reader to \cite{CFNT22}.

Recently, in \cite{MV19,V19} it has been studied
algebra norms on evolution algebras. Here, by \textit{norm
algebra} we mean an evolution algebra $E$ endowed with a norm
$\|\cdot\|$ such that $\|ab\|\leq\|a\|\|b\|$, for every $a,b\in
E$, and $E$ is a \textit{Banach algebra} if it has a complete
algebra norm. Basically, in the mentioned papers, the norm was
taken as $\ell_1$-norm defined in terms of a fixed natural basis.
Furthermore, certain convergence of the trajectories of the
multiplication operator was investigated. However, it is natural to consider other types of Banach norm on
infinite dimensional evolution algebras. As an example, recently, in \cite{VCR20} Hilbert evolution algebras have been introduced and studied.

In the present paper, we endow every evolution algebra (finite-dimensional) with Banach algebra norm. We stress that the defined norm provides further directions in the infinite dimensional evolution algebras which involves $\ell_\infty$-norms (it will be a topic of another paper). On the other hand, recently, derivations and
automorphisms of nilpotent evolution algebras have been studied and described \cite{MKOQ19,MKQ20}. This together with Banach algebra structure allows us to investigated the set
$\exp(Der(E))$. Moreover, we prove that $\exp(Der(E))$ is a normal subgroup of $Aut(E)$, and its corresponding index is calculated.
The obtained results will shed some light into inner automorphism problem on evolution algebras.

\section{Evolution algebras}

Recall the definition of evolution algebras. Let $\bf{E}$ be a
vector space over a field $\mathbb K$. In what follows, we always
assume that $\mathbb K$ has characteristic zero. The vector space
$\bf{E}$ is called {\it evolution algebra} w.r.t. {\it natural
basis} $\{{\bf e}_1, {\bf e}_2, . . . \}$ if a multiplication rule
$\cdot$ on $\bf{E}$ satisfies
$$
{\bf e}_i\cdot {\bf e}_j={\bf 0},\ i\neq j,
$$
$$
{\bf e}_i\cdot{\bf e}_i=\sum_{k}a_{ik}{\bf e}_k,\ i\geq1.
$$

From the above definition it follows that evolution algebras are commutative (therefore, flexible).

We denote by $A=(a_{ij})^n_{i,j=1}$ the matrix of the structural constants
 of the finite-dimensional evolution algebra $\bf{E}$.
Obviously, $rank A =\dim(\bf{E}\cdot\bf{E})$. Hence, for finite-dimensional evolution algebra the
rank of the matrix does not depend on choice of natural basis.

In what follows for convenience, we write ${\bf u}{\bf v}$ instead ${\bf u}\cdot{\bf v}$
for any ${\bf u},{\bf v}\in\bf{E}$ and we shall write $\bf{E}^2$ instead $\bf{E}\cdot\bf{E}$.

%A linear map $\psi: {\bf E}_1\to{\bf E}_2$ is called an {\it homomorphism} of evolution algebras
%if $\psi({\bf u}{\bf v})=\psi({\bf u})\psi({\bf v})$ for any ${\bf u},{\bf v}\in{\bf E}_1$. Moreover, if $\psi$ is bijective, then it is called an
%{\it isomorphism}. In this case, the last relation is denoted by ${\bf E}_1\cong{\bf E}_2$.

For an evolution algebra $\bf E$ we introduce the following
sequence, $k\geq1$
\begin{equation}\label{E^k}
{\bf E}^{k}=\sum_{i=1}^{\lfloor k/2\rfloor}{\bf E}^{i}{\bf
E}^{k-i},
\end{equation}
where $\lfloor x\rfloor$ denotes the integer part of $x$.

An evolution algebra ${\bf E}$ is called \textit{nilpotent} if
there exists some $n\in\mathbb N$ such that ${\bf E}^m=\bf 0$. The
smallest $m$ such that ${\bf E}^m=\bf 0$ is called the index of
nilpotency.

\begin{thm}\label{thm_ro}\cite{rozomir}
An $n$-dimensional evolution algebra $\bf E$ is nilpotent iff it admits a natural basis such that the matrix of the
structural constants corresponding to $\bf E$ in this basis is represented in the form
$$
\tilde{A}=\left(
\begin{array}{lllll}
0 & \tilde{a}_{12} & \tilde{a}_{13} & \vdots  & \tilde{a}_{1n}\\
0 & 0 & \tilde{a}_{23} & \vdots  & \tilde{a}_{2n}\\
\vdots & \vdots & \vdots & \ddots & \vdots\\
0 & 0 & 0 & \vdots & \tilde{a}_{n-1,n}\\
0 & 0 & 0 & \vdots & 0
\end{array}\right)
$$
\end{thm}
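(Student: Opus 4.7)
The plan is to handle the two implications separately. For the ``if'' direction I filter $\mathbf{E}$ by the tail spans of the natural basis and push the powers $\mathbf{E}^k$ down this filtration until they vanish. For the ``only if'' direction I associate with an arbitrary natural basis a directed graph that records the non-zero entries of $A$, argue that nilpotency forces the graph to be acyclic, and then use a topological sort to re-label the basis into the desired strictly upper triangular form.

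For the reverse implication, suppose that in some natural basis $\{\mathbf{e}_1,\ldots,\mathbf{e}_n\}$ the structural matrix is already strictly upper triangular. Set $V_k=\spa(\mathbf{e}_k,\mathbf{e}_{k+1},\ldots,\mathbf{e}_n)$ for $k=1,\ldots,n$ and $V_{n+1}=0$. Since $\mathbf{e}_i\mathbf{e}_j=0$ for $i\neq j$ and $\mathbf{e}_i^2\in V_{i+1}$, a one-line computation yields
\[
V_k\cdot V_l\subseteq V_{\max(k,l)+1}.
\]
A short strong induction on $k$, using \eqref{E^k}, commutativity and a case split on $i\leq\lfloor k/2\rfloor$, shows that the sequence $\mathbf{E}^k$ is descending. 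I then prove by induction on $s$ that $\mathbf{E}^{2^s}\subseteq V_{s+1}$: the base case $s=0$ is the tautology $\mathbf{E}=V_1$, and in the inductive step each summand $\mathbf{E}^i\mathbf{E}^{2^s-i}$ in \eqref{E^k} is contained in $V_1\cdot V_s\subseteq V_{s+1}$, because $\mathbf{E}^{2^s-i}\subseteq\mathbf{E}^{2^{s-1}}\subseteq V_s$ by monotonicity and the induction hypothesis. Setting $s=n$ gives $\mathbf{E}^{2^n}\subseteq V_{n+1}=0$, so $\mathbf{E}$ is nilpotent.

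For the forward implication, assume $\mathbf{E}$ is nilpotent and fix any natural basis with structural matrix $A=(a_{ij})$. Define the directed graph $G$ on $\{1,\ldots,n\}$ whose arcs are $i\to j$ whenever $a_{ij}\neq 0$. The heart of the argument is that $G$ is acyclic. Suppose for contradiction that $i_1\to i_2\to\cdots\to i_l\to i_1$ is a cycle. The crucial algebraic fact is that squaring in an evolution algebra kills all cross terms: for any scalars $c_p$,
\[
\Bigl(\sum_p c_p\mathbf{e}_p\Bigr)^2=\sum_p c_p^2\mathbf{e}_p^2.
\]
Starting from $\mathbf{e}_{i_1}$ and squaring iteratively, a straightforward induction shows that $\mathbf{e}_{i_1}^{2^s}$ contains a non-zero scalar multiple of $\mathbf{e}_{i_t}$ with $t\equiv s+1\pmod l$; in particular $\mathbf{e}_{i_1}^{2^s}\neq\mathbf{0}$ for every $s$. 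Since $\mathbf{e}_{i_1}^{2^s}\in\mathbf{E}^{2^s}$, this contradicts nilpotency. Hence $G$ is a DAG; choosing a topological ordering $\pi$ of its vertices and setting $\tilde{\mathbf{e}}_k=\mathbf{e}_{\pi^{-1}(k)}$ produces a new natural basis whose structural constants $\tilde{a}_{kl}=a_{\pi^{-1}(k),\pi^{-1}(l)}$ vanish for $k\geq l$, giving the required strictly upper triangular form.

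The main obstacle is establishing the acyclicity of $G$: in a general commutative non-associative algebra a cycle in the interaction pattern need not obstruct nilpotency, because iterated squarings produce cross terms that can cancel. It is precisely the evolution identity $\mathbf{e}_i\mathbf{e}_j=0$ for $i\neq j$ that rules out such cancellations, so a cycle survives arbitrarily many squarings and prevents the powers $\mathbf{E}^{2^s}$ from ever becoming trivial.
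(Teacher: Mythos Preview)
The paper does not give its own proof of this statement; it is quoted from \cite{rozomir}. Your reverse implication via the filtration $V_k=\spa(\mathbf{e}_k,\ldots,\mathbf{e}_n)$ is correct. The forward implication, however, has a real gap in the ``straightforward induction'' that $\mathbf{e}_{i_1}^{2^s}$ retains a non-zero coefficient on $\mathbf{e}_{i_t}$. Squaring $\sum_p c_p\mathbf{e}_p$ gives coefficient $\sum_p c_p^{2}a_{p,j}$ on $\mathbf{e}_j$, and contributions from vertices outside the chosen cycle (or from other arcs into $i_{t+1}$) can cancel the term $c_{i_t}^{2}a_{i_t,i_{t+1}}$. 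Concretely, take $n=3$ with $\mathbf{e}_1^2=\mathbf{e}_2$, $\mathbf{e}_2^2=\mathbf{e}_1+\mathbf{e}_3$, $\mathbf{e}_3^2=-\mathbf{e}_2$. The graph has the $2$-cycle $1\to 2\to 1$, yet
\[
\mathbf{e}_1^{2}=\mathbf{e}_2,\qquad \mathbf{e}_1^{4}=\mathbf{e}_1+\mathbf{e}_3,\qquad \mathbf{e}_1^{8}=\mathbf{e}_1^{2}+\mathbf{e}_3^{2}=\mathbf{e}_2-\mathbf{e}_2=\mathbf{0},
\]
so your inductive claim fails at $s=3$. (The algebra is indeed non-nilpotent, since one checks $\mathbf{E}^3=\mathbf{E}^2\neq 0$; the point is that the \emph{witness} you propose does not survive.) The evolution identity kills cross terms in a single squaring, but it does not prevent cancellation among the diagonal terms $c_p^{2}\mathbf{e}_p^{2}$ themselves.

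A cancellation-free witness, in the spirit of \cite{rozomir}, is obtained by right-multiplying by basis vectors instead of squaring. Given a walk $i_0\to i_1\to\cdots$ in $G$, set $y_1=\mathbf{e}_{i_0}^{2}$ and $y_{m+1}=y_m\cdot\mathbf{e}_{i_m}$; then inductively $y_m=\bigl(\prod_{s=0}^{m-2}a_{i_s,i_{s+1}}\bigr)\mathbf{e}_{i_{m-1}}^{2}\neq\mathbf{0}$, because multiplying by $\mathbf{e}_{i_m}$ picks out a \emph{single} coordinate rather than a sum. Since $y_m\in\mathbf{E}^{\langle m+1\rangle}\subseteq\mathbf{E}^{m+1}$ (with $\mathbf{E}^{\langle k+1\rangle}=\mathbf{E}^{\langle k\rangle}\mathbf{E}$), a cycle in $G$ yields arbitrarily long walks and hence $\mathbf{E}^{k}\neq 0$ for all $k$, contradicting nilpotency. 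Your DAG/topological-sort packaging is fine once acyclicity is established this way.
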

Due to Theorem \ref{thm_ro} any nilpotent evolution algebra $\bf{E}$ with $\dim({\bf{E}}^2)=n-1$ has the following form:
\begin{equation}\label{evolalg}
{\bf e}_i^2=\left\{
\begin{array}{lll}
\sum\limits_{j=i+1}^na_{ij}{\bf e}_j, & i\leq n-1;\\
{\bf 0}, & i=n.
\end{array}
\right.
\end{equation}
where $a_{ij}\in\mathbb K$ and $a_{i,i+1}\neq0$ for any $i<n$.

In \cite{clor} it has been establish that a nilpotent evolution
algebra has maximal index of nilpotency $2^{n-1} + 1$, if and only
if the multiplication table of $\bf E$ is given by
\eqref{evolalg}.

In what follows, we restrict ourselves to the nilpotent evolution
algebras with maximal index of nilpotency.

%
%\begin{lemma}\label{lem1}
%Let $\bf{E}$ and ${\bf{E}}'$ be evolution algebras with basis $\{{\bf e}_i\}_{i=1}^n$ and
%$\{{\bf f}_i\}_{i=1}^n$ respectively, defined by
%$$
%{\bf e}_i^2=\left\{
%\begin{array}{lll}
%a_{i,i+1}{\bf e}_{i+1}+a_{in}{\bf e}_n, & i<n-1;\\
%a_{n-1,n}{\bf e_{n}}, & i=n-1;\\
%{\bf 0}, & i=n.
%\end{array}
%\right.
%\ \ \ \ {\bf f}_i^2=\left\{
%\begin{array}{lll}
%{\bf f}_{i+1}, & i<n;\\
%{\bf 0}, & i=n.
%\end{array}
%\right.
%$$
%If $a_{i,i+1}\neq0$ for every $i<n$,
%then ${\bf E}\cong{\bf E}'$.
%\end{lemma}
%

\section{Derivations and automorphisms}

In this section, we recall some auxiliary facts from \cite{MKOQ19}
which will be used in the next section.

Recall that derivation of an evolution algebra $\bf{E}$ is a linear mapping
$d : \bf{E}\to\bf{E}$ such that
$d({\bf u}{\bf v}) = d({\bf u}){\bf v} + {\bf u}d({\bf v})$
for all ${\bf u}, {\bf v}\in\bf{E}$.

We note that for any algebra, the space $\Der(\bf{E})$ of all derivations is a Lie algebra w.r.t.
the commutator multiplication:
$$
[d_1,d_2]=d_1d_2-d_2d_1,\ \ \ \forall d_1,d_2\in\Der({\bf E}).
$$

%\begin{lemma}\label{lem2}
%Let ${\bf E}_1$, ${\bf E}_2$ be two isomorphic evolution algebras. Then $\Der({\bf E}_1)\cong\Der({\bf E}_2)$.
%\end{lemma}
%
%\begin{proof}
%Let $\varphi$ be an isomorphism from ${\bf E}_1$ to ${\bf E}_2$. It is easy to check
%that a linear mapping $\psi$ defined on $\Der({\bf E}_1)$ by
%$$
%\psi(d)=\varphi(d)\varphi^{-1}
%$$
%is an isomorphism of Lie algebras $\Der({\bf E}_1)$ and $\Der({\bf E}_2)$.
%\end{proof}

For a given structural matrix $A=(a_{ij})_{i,j\geq1}^n$ of nilpotent evolution algebra
$\bf E$ with $dim({\bf E}^2)=n-1$ we denote
\begin{equation}\label{A_a_ijneq0}
I_A=\{(i,j):i+1<j<n,\ a_{ij}\neq0\}.
\end{equation}

\begin{thm}\label{thm_der}\cite{MKOQ19}
Let $\bf{E}$ be an evolution algebra with structural matrix $A=(a_{ij})_{i,j\geq1}^n$ in a
natural basis
$\{{\bf e}_i\}_{i=1}^n$. If $\bf E$ is a nilpotent with
$rank A=n-1$, then the following statements hold
\begin{enumerate}
\item[$(i)$] if $I_A\neq\emptyset$ then
$$
\Der({\bf{E}})=\left\{
\left(
\begin{array}{lllll}
0 & 0 & \ldots & 0 & \beta\\
0 & 0 & \ldots & 0 & 0\\
\vdots & \vdots & \ddots & \vdots & \vdots\\
0 & 0 & \ldots & 0 & 0\\
0 & 0 & \ldots & 0 & 0
\end{array}
\right):\ \beta\in\mathbb K
\right\}
$$
\\

\item[$(ii)$]
if $I_A=\emptyset$ then
$$
\Der({\bf{E}})=\left\{
\left(
\begin{array}{lllll}
\alpha & 0 & \ldots & 0 & \beta\\
0 & 2\alpha & \ldots & 0 & (2-2^{n-1})\alpha a_{1n}\\
\vdots & \vdots & \ddots & \vdots & \vdots\\
0 & 0 & \ldots & 2^{n-2}\alpha & (2^{n-2}-2^{n-1})\alpha a_{n-2,n}\\
0 & 0 & \ldots & 0 & 2^{n-1}\alpha
\end{array}
\right):\ \alpha,\beta\in\mathbb K
\right\}
$$
\end{enumerate}
\end{thm}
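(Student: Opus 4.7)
The plan is to parametrize an arbitrary derivation $d\in\Der({\bf E})$ by its matrix $(d_{ij})$ in the natural basis via $d({\bf e}_i)=\sum_j d_{ij}{\bf e}_j$, and to extract a complete set of scalar constraints by evaluating $d$ on basis products. Two families of equations appear. From $d({\bf e}_i{\bf e}_j)=0$ for $i\neq j$ combined with ${\bf e}_k{\bf e}_l=0$ whenever $k\neq l$, the Leibniz rule collapses to $d_{ij}{\bf e}_j^2+d_{ji}{\bf e}_i^2=0$, whose coefficient of ${\bf e}_l$ reads
\begin{equation*}
d_{ij}a_{jl}+d_{ji}a_{il}=0\qquad (i\neq j,\ 1\leq l\leq n).\tag{$\ast$}
\end{equation*}
Applying $d$ to ${\bf e}_i^2=\sum_l a_{il}{\bf e}_l$ and comparing with $2d({\bf e}_i){\bf e}_i$ gives
\begin{equation*}
\sum_l a_{il}d_{lk}=2d_{ii}a_{ik}\qquad (1\leq i\leq n-1,\ 1\leq k\leq n).\tag{$\ast\ast$}
\end{equation*}

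The first reduction uses only $(\ast)$ together with the strict upper-triangularity of $A$ and $a_{i,i+1}\neq 0$. For $i<j$, choose $l=i+1$: since $i+1\leq j$, $a_{j,i+1}=0$, so $(\ast)$ forces $d_{ji}=0$. Feeding this back with $l=j+1$ when $j\leq n-1$ (so that $a_{j,j+1}\neq 0$) kills $d_{ij}$. The case $j=n$ leaves $d_{in}$ untouched because $a_{n,l}\equiv 0$ makes $(\ast)$ degenerate there. Hence the only potentially nonzero entries of the matrix of $d$ are the diagonal entries $d_{ii}$ and the last-column entries $d_{1n},\ldots,d_{n-1,n}$.

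With $d$ of this reduced shape, $(\ast\ast)$ becomes a triangular system. For $k\leq n-1$ only the term $l=k$ on the left survives (since $d_{lk}=0$ for $l\neq k$), so $a_{ik}d_{kk}=2d_{ii}a_{ik}$; whenever $a_{ik}\neq 0$ with $k\leq n-1$, this gives $d_{kk}=2d_{ii}$. Taking $k=i+1$ iteratively yields $d_{ii}=2^{i-1}\alpha$ for $i\leq n-1$ with $\alpha:=d_{11}$, and $(\ast\ast)$ at $(i,k)=(n-1,n)$ then forces $d_{nn}=2^{n-1}\alpha$. This is the diagonal of case (ii). In case (i), the assumption $I_A\neq\emptyset$ supplies a pair $(i_0,j_0)$ with $i_0+1<j_0<n$ and $a_{i_0,j_0}\neq 0$, which via the same relation compels $2^{j_0-1}\alpha=2^{i_0}\alpha$; since $j_0-1>i_0$ and $\mathbb{K}$ has characteristic zero, this collapses $\alpha$ and hence the whole diagonal to zero. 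Lastly, $(\ast\ast)$ at $k=n$ becomes $\sum_{l>i}a_{il}d_{ln}=2d_{ii}a_{in}$, a triangular cascade that, read from $i=n-1$ down to $i=2$, recursively determines $d_{l,n}$ (and, in case (i), sets each to zero); the equation at $i=1$ does not involve $d_{1,n}$, so that entry remains a free parameter $\beta$. Matching the resulting matrices with those displayed in $(i)$ and $(ii)$ closes the necessity, and sufficiency follows by a one-line verification of the Leibniz rule on basis products.

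The delicate step I expect will be the bookkeeping in the second paragraph: one must check that the combined use of $l=i+1$ and $l=j+1$ in $(\ast)$, together with its $i\leftrightarrow j$ symmetry, annihilates every off-diagonal entry except those in the last column while not inadvertently constraining $d_{1,n}$. Once that reduction is clean, cases (i) and (ii) diverge only through the extra relation that $I_A\neq\emptyset$ injects into the diagonal analysis, and the rest of the argument is a triangular cascade.
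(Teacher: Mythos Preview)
The paper does not prove this theorem; it is quoted verbatim from \cite{MKOQ19} as background (the section opens with ``we recall some auxiliary facts from \cite{MKOQ19}''), so there is no in-text argument to compare against. Your approach---expanding the Leibniz rule on ${\bf e}_i{\bf e}_j$ ($i\neq j$) and on ${\bf e}_i^2$, then using the strict upper-triangularity of $A$ together with $a_{i,i+1}\neq 0$ to kill the off-diagonal entries outside the last column, and finally running a triangular cascade for the diagonal and for $d_{2,n},\dots,d_{n,n}$---is exactly the standard computation carried out in the cited source, and it is correct.

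Two minor points of bookkeeping. First, the last-column cascade should run all the way to $i=1$: the equation $(\ast\ast)$ at $i=1$, $k=n$ is what pins down $d_{2,n}$, not a redundant check. The reason $d_{1,n}$ is free is simply that it occurs in \emph{none} of the equations $(\ast\ast)$, since $\sum_l a_{il}d_{ln}$ only involves indices $l>i\geq 1$. Second, your recursion actually gives $d_{i+1,n}=(2^{i}-2^{n-1})\alpha\,a_{i,n}/a_{i,i+1}$, which agrees with the displayed matrix in part $(ii)$ only under the normalization $a_{i,i+1}=1$; that normalization is tacit in \cite{MKOQ19} and should be made explicit when you ``match the resulting matrices''.
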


Recall that by an {\it automorphism} of an evolution algebra
$\bf E$ we mean an isomorphism of $\bf E$ into itself. The set of all
automorphisms is denoted by $Aut({\bf E})$. It is known that
$Aut({\bf E})$ is a group. In this section we are going
to describe $Aut({\bf E})$ of nilpotent evolution algebras
with maximal index of nilpotency.

If $I_A\neq\emptyset$, then by $\eta$ we denote the largest common divisor
of all numbers $2^{j-1}-2^{i}$ where $(i,j)\in I_A$, i.e.,
\begin{equation}\label{eta}
\eta=LCD_{(i,j)\in I_A}(2^{j-1}-2^i)
\end{equation}

\begin{thm}\label{thm_auto}\cite{MKOQ19}
Let $\bf E$ be an $n$-dimensional nilpotent evolution algebra with maximal index of nilpotency
and $A=(a_{ij})_{i,j=1}^n$ be its structural matrix in a natural basis $\{{\bf e}_i\}_{i=1}^n$.
Then the following statements hold:
\begin{enumerate}
\item[$(i)$] if $I_A\neq\emptyset$ then
$$
Aut({\bf{E}})=\left\{
\left(
\begin{array}{lllll}
\alpha & 0 & \ldots & 0 & \beta\\
0 & \alpha^2 & \ldots & 0 & \varphi_{2n}\\
\vdots & \vdots & \ddots & \vdots & \vdots\\
0 & 0 & \ldots & \alpha^{2^{n-2}} & \varphi_{n-1,n}\\
0 & 0 & \ldots & 0 & \alpha^{2^{n-1}}
\end{array}
\right):\ \alpha,\beta\in\mathbb K,\ \alpha^\eta=1
\right\}
$$
where $\eta$ is defined as \eqref{eta}, and $\varphi_{in}$ is given by the following recurrence formula
$$
\begin{array}{ll}
\varphi_{n-1,n}=a_{n-2,n}(\alpha^{2^{n-2}}-\alpha^{2^{n-1}}),\\[2mm]
\varphi_{n-i,n}=a_{n-i-1,n}(\alpha^{2^{n-i-1}}-\alpha^{2^{n-1}})-\sum\limits_{k=1}^{i-1}a_{n-i-1,n-k}\varphi_{n-k,n}, & 1<i<n-1.
\end{array}
$$
\item[$(ii)$] if $I_A=\emptyset$ then
$$
Aut({\bf{E}})=\left\{
\left(
\begin{array}{lllll}
\alpha & 0 & \ldots & 0 & \beta\\
0 & \alpha^2 & \ldots & 0 & a_{1,n}(\alpha^2-\alpha^{2^{n-1}})\\
\vdots & \vdots & \ddots & \vdots & \vdots\\
0 & 0 & \ldots & \alpha^{2^{n-2}} & a_{n-2,n}(\alpha^{2^{n-2}}-\alpha^{2^{n-1}})\\
0 & 0 & \ldots & 0 & \alpha^{2^{n-1}}
\end{array}
\right):\ \alpha,\beta\in\mathbb K,\ \alpha\neq0
\right\}
$$
\end{enumerate}
\end{thm}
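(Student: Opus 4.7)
The plan is first to pin down the shape of any $\varphi \in Aut(\mathbf{E})$ by exploiting a chain of characteristic ideals, and then to extract the precise entries from $\varphi(\mathbf{e}_i\mathbf{e}_j) = \varphi(\mathbf{e}_i)\varphi(\mathbf{e}_j)$. I would first identify the iterated annihilator series $\mathrm{Ann}_1(\mathbf{E}) \subset \mathrm{Ann}_2(\mathbf{E}) \subset \cdots$, where $\mathrm{Ann}_1(\mathbf{E}) = \{x \in \mathbf{E} : x\mathbf{E}=0\}$ and $\mathrm{Ann}_{k+1}(\mathbf{E}) = \{x : x\mathbf{E} \subseteq \mathrm{Ann}_k(\mathbf{E})\}$. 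Using the orthogonality $\mathbf{e}_i\mathbf{e}_j = \delta_{ij}\mathbf{e}_i^2$ together with $a_{i,i+1} \neq 0$, one shows by induction that $\mathrm{Ann}_k(\mathbf{E}) = \mathrm{span}\{\mathbf{e}_{n-k+1},\ldots,\mathbf{e}_n\}$. Since each $\mathrm{Ann}_k(\mathbf{E})$ is characteristic, every $\varphi \in Aut(\mathbf{E})$ preserves this coordinate flag, so its matrix in the natural basis is upper triangular with nonzero diagonal.

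\textbf{Step 2 (reducing to diagonal plus last column).} Writing $\varphi(\mathbf{e}_i) = \sum_{k\geq i} c_{ik}\mathbf{e}_k$, the relation $\varphi(\mathbf{e}_i)\varphi(\mathbf{e}_j) = 0$ for $i < j < n$ reduces via orthogonality to $\sum_{k=j}^{n-1} c_{ik}c_{jk}\mathbf{e}_k^2 = 0$. Reading off the $\mathbf{e}_{j+1}$-coefficient isolates the single monomial $c_{ij}c_{jj} a_{j,j+1}$; since $c_{jj}, a_{j,j+1} \neq 0$, one deduces $c_{ij} = 0$ for all $i < j < n$. Consequently $\varphi$ is diagonal off the last column: $\varphi(\mathbf{e}_i) = \alpha_i\mathbf{e}_i + \varphi_{in}\mathbf{e}_n$ for $i < n$ and $\varphi(\mathbf{e}_n) = \alpha_n\mathbf{e}_n$.

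\textbf{Step 3 (diagonal entries and the torsion condition $\alpha^\eta = 1$).} Expanding $\varphi(\mathbf{e}_i^2) = \varphi(\mathbf{e}_i)^2$, the cross terms $\mathbf{e}_i\mathbf{e}_n$ and $\mathbf{e}_n^2$ vanish, so $\varphi(\mathbf{e}_i)^2 = \alpha_i^2 \mathbf{e}_i^2$. Matching $\mathbf{e}_{i+1}$-coefficients (and invoking $a_{i,i+1}\neq 0$) gives the cascade $\alpha_{i+1} = \alpha_i^2$, yielding $\alpha_i = \alpha^{2^{i-1}}$ for $\alpha := \alpha_1$. Matching the $\mathbf{e}_k$-coefficient for $i+1 < k < n$ gives $a_{ik}(\alpha^{2^{k-1}} - \alpha^{2^i}) = 0$; in case (ii) this is vacuous since $I_A = \emptyset$, while in case (i) it yields the torsion condition $\alpha^\eta = 1$ via \eqref{eta}.

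\textbf{Step 4 (last column recurrence, verification, main obstacle).} Matching $\mathbf{e}_n$-coefficients of $\varphi(\mathbf{e}_i^2) = \varphi(\mathbf{e}_i)^2$ produces a triangular linear system $\sum_{k=i+1}^{n-1} a_{ik}\varphi_{kn} = a_{in}(\alpha^{2^i}-\alpha^{2^{n-1}})$ which, solved backwards from $i = n-2$, determines $\varphi_{kn}$ for $k = 2,\ldots,n-1$ by exactly the recurrences displayed in the theorem; $\varphi_{1n} = \beta$ remains free. A direct check that the resulting matrix preserves all products finishes the characterization. The main obstacle I anticipate is confirming, in Step 3, that the finitely many conditions $\alpha^{2^{k-1}-2^i} = 1$ indexed by $(i,k)\in I_A$ collapse \emph{exactly} to $\alpha^\eta = 1$ and produce no additional constraints, which hinges on $\eta$ being the greatest common divisor of the relevant exponents and on elementary Bezout reasoning; the backward solution in Step 4 is routine but must be carried out carefully to ensure it is consistent with the off-diagonal constraints already exploited in Step 3.
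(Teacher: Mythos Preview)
The paper does not prove this theorem; it is quoted from \cite{MKOQ19} as an auxiliary result, so there is no in-paper argument against which to compare your outline.

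Judged on its own terms your proposal is sound and follows the standard route. The annihilator flag in Step~1 indeed yields $\mathrm{Ann}_k(\mathbf{E})=\mathrm{span}\{\mathbf{e}_{n-k+1},\dots,\mathbf{e}_n\}$ and forces upper-triangularity; the coefficient extraction in Step~2 is correct because the $\mathbf{e}_{j+1}$-component of $\sum_{k\ge j}c_{ik}c_{jk}\mathbf{e}_k^2$ receives a contribution only from $k=j$; and the cascade $\alpha_{i+1}=\alpha_i^2$ together with the last-column recurrence in Steps~3--4 drops straight out of comparing coefficients in $\varphi(\mathbf{e}_i^2)=\varphi(\mathbf{e}_i)^2$. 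Your anticipated ``obstacle'' is not an obstacle at all: the system $\alpha^{2^{k-1}-2^i}=1$ for $(i,k)\in I_A$ is equivalent to $\alpha^{\eta}=1$ by a one-line B\'ezout argument, and no extra constraints appear. One small bookkeeping point worth noting: your $\mathbf{e}_n$-coefficient equation reads $a_{i,i+1}\varphi_{i+1,n}=a_{in}(\alpha^{2^{i}}-\alpha^{2^{n-1}})-\sum_{k>i+1}a_{ik}\varphi_{kn}$, so the recurrence displayed in the statement implicitly uses the normalization $a_{i,i+1}=1$ adopted in \cite{MKOQ19}; either invoke that normalization explicitly or carry the factors $a_{i,i+1}$ through your Step~4.
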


\section{On the set $\exp(\Der({\bf E}))$}

In this section, we are going to study the structure of the set $Exp(Der({\bf{E}})$.
Assume that ${\bf{E}}$ be an evolution algebra with a structural matrix $A=(a_{ij})_{i,j\geq1}^n$ in a
natural basis $B=\{{\bf e}_i\}_{i=1}^n$. In \cite{t} it was considered an $\ell_1$-norm on  ${\bf{E}}$ with respect to $B$ which is defined as follows
$$
\|\xb\|_1=\sum_{k=1}^n|x_k|
$$
whenever $\xb=\sum_{k=1}^nx_k{\bf e}_k$. However, it is noted in \cite{MV19} that the defined norm is not consistent with evolution algebra
multiplication. Furthermore, the authors have found necessary and sufficient conditions on the algebra when the norm $\|\cdot\|_1$ is an algebra norm. In this section, we are going to define a norm which endows the considered evolution algebra with norm algebra without any condition.

Let us denote
\begin{equation}\label{FF2}
\gamma:=\sup_{1\leq j\leq n}\bigg\{\sum_{i=1}^n|a_{ij}|\bigg\}
\end{equation}
and define
\begin{equation}\label{normx}
\|{{\bf x}}\|_\gamma =\gamma\max_{i\in I}\{|x_i|\},
\end{equation}
here, as before,
$$
{\bf x}=\sum_{i=1}^nx_{i}{\bf e}_i.
$$

\begin{thm} Let ${\bf{E}}$ be an evolution algebra with a structural matrix $A=(a_{ij})_{i,j\geq1}^n$ in a
natural basis $B=\{{\bf e}_i\}_{i=1}^n$. Then the pair $({\bf{E}}, \|\cdot\|_\gamma)$ is a Banach algebra.
\end{thm}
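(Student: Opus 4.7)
The plan is to verify three requirements in turn: that $\|\cdot\|_\gamma$ is a norm, that $({\bf E},\|\cdot\|_\gamma)$ is complete, and that $\|\cdot\|_\gamma$ is submultiplicative. Under the standing assumption that ${\bf E}$ is nilpotent with maximal index of nilpotency, the structure constants $a_{i,i+1}$ are non-zero, hence $\gamma>0$, so $\|\cdot\|_\gamma$ is a positive scalar multiple of the standard $\ell_\infty$-norm on coordinates in the natural basis $B$. This gives the norm axioms for free, and since ${\bf E}$ is finite-dimensional all norms are equivalent and complete, so the Banach-space part is automatic. (If one wished to cover the degenerate case $\gamma=0$, the algebra has trivial multiplication and no norm in the strict sense is produced by the formula; this is outside the setting considered.)

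The only substantive step is to verify $\|{\bf x}{\bf y}\|_\gamma\leq\|{\bf x}\|_\gamma\|{\bf y}\|_\gamma$. For this, I would expand the product using the defining evolution-algebra relations ${\bf e}_i{\bf e}_j={\bf 0}$ for $i\neq j$ and ${\bf e}_i^2=\sum_k a_{ik}{\bf e}_k$: for ${\bf x}=\sum_i x_i{\bf e}_i$ and ${\bf y}=\sum_j y_j{\bf e}_j$, the double sum collapses to
$$
{\bf x}{\bf y}=\sum_{k=1}^n\left(\sum_{i=1}^n x_i y_i a_{ik}\right){\bf e}_k.
$$
Bounding the modulus of the $k$-th coordinate by $(\max_i|x_i|)(\max_i|y_i|)\sum_{i}|a_{ik}|$, then taking $\max_k$ and multiplying by $\gamma$, yields
$$
\|{\bf x}{\bf y}\|_\gamma\leq\gamma\left(\max_{k}\sum_{i=1}^n|a_{ik}|\right)\max_i|x_i|\cdot\max_i|y_i|\leq\gamma^2\max_i|x_i|\cdot\max_i|y_i|=\|{\bf x}\|_\gamma\|{\bf y}\|_\gamma,
$$
where the penultimate inequality is precisely the definition of $\gamma$ in \eqref{FF2}.

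I do not expect any serious obstacle. The scaling factor $\gamma$ built into \eqref{normx} is exactly what is required to absorb the column-sum bound on the structure matrix, so the submultiplicativity inequality becomes tight rather than a near-miss; without this prefactor one recovers the failure of the $\ell_1$-norm analysed in \cite{MV19}. Consequently the whole verification reduces to the few lines sketched above.
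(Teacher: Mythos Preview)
Your argument is correct and follows essentially the same route as the paper: expand ${\bf x}{\bf y}$ via the evolution-algebra relations, bound each coordinate by $(\max_i|x_i|)(\max_i|y_i|)\sum_i|a_{ik}|$, absorb the column sum into $\gamma$, and invoke finite-dimensionality for completeness. You are slightly more careful than the paper in isolating the hypothesis $\gamma>0$ needed for $\|\cdot\|_\gamma$ to be a genuine norm; the paper simply declares the norm property obvious without addressing the degenerate case.
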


\begin{proof} It is obvious that $\|\cdot\|_\g$ is a norm. Now, we establish that
$\parallel\cdot\parallel_\g$ is an algebra norm. Indeed, let $\xb=\sum_{i=1}^nx_i{\bf e}_i$ and
$\yb=\sum_{i=1}^ny_i{\bf e}_i$. Then
\begin{eqnarray}\label{prod}
{\bf x}\cdot{\bf y}=\sum_{i,j=1}^na_{ij}x_iy_i{\bf e}_j.
\end{eqnarray}
By \eqref{prod}, \eqref{normx}, we have
\begin{eqnarray*}
\|{\bf x}\cdot{\bf y}\|_\g&=&\gamma\max_{1\leq j\leq n}\bigg\{\bigg|\sum_{i=1}^na_{ij}x_i y_i\bigg|\bigg\}\\
&\leq&\gamma^2\max_{i}\{|x_i|\}\max_{i}\{|y_i|\}\\
&=&\|{\bf x}\|_\g\|{\bf y}\|_\g.
\end{eqnarray*}
The finite dimensionality of ${\bf{E}}$ implies that it is a Banach space, then we arrive at the desired assertion.
\end{proof}

\begin{rk}
The defined norm provides further directions in the infinite dimensional evolution algebras which involves $\ell_\infty$-norms. In this
point, we mention recent works on Hilbert norms on infinite dimensional evolution algebras \cite{VCR20}.
\end{rk}

According to the previous theorem, for any linear mapping $D$ of ${\bf{E}}$ we may introduce its exponential as follows
$$
e^D=\sum_{k\geq0}\frac{D^k}{k!}
$$
where the convergence is considered with respect to the norm $\|\cdot\|_\g$.

\begin{rk}
 We point out that in most of the literature, $e^D$ is defined for nilpotent mappings. However, the Banach algebra structure allows us to investigate such operators in general setting.
\end{rk}

For a given evolution algebra ${\bf E}$,  we set
$$
\exp\left(\Der({\bf{E}})\right):=\left\{
e^d: d\in\Der({\bf{E}})
\right\}.
$$

\begin{problem} Describe the set $\exp\left(\Der({\bf{E}})\right)$.
\end{problem}

A main result of this section is to partially solve this problem in the class of nilpotent evolution algebras with structural matrix $A=(a_{ij})_{i,j\geq1}^n$ such that $\textrm{rank}(A)=n-1$.

\begin{pro}\label{prop_exp}
Let $\bf{E}$ be an evolution algebra with structural matrix $A=(a_{ij})_{i,j\geq1}^n$ in a
natural basis
$\{{\bf e}_i\}_{i=1}^n$. If $\bf E$ is a nilpotent with
$\textrm{rank}(A)=n-1$, then the following statements hold:
\begin{enumerate}
\item[$(i)$] if $I_A\neq\emptyset$ then
$$
\exp\left(\Der({\bf{E}})\right)=\left\{
\left(
\begin{array}{lllll}
1 & 0 & \ldots & 0 & \beta\\
0 & 1 & \ldots & 0 & 0\\
\vdots & \vdots & \ddots & \vdots & \vdots\\
0 & 0 & \ldots & 1 & 0\\
0 & 0 & \ldots & 0 & 1
\end{array}
\right):\ \beta\in\mathbb K
\right\}
$$
\\

\item[$(ii)$]
if $I_A=\emptyset$ then
$$
\exp\left(\Der({\bf{E}})\right)=\left\{
\left(
\begin{array}{lllll}
e^\alpha & 0 & \ldots & 0 & \beta\\
0 & e^{2\alpha} & \ldots & 0 & (e^{2\alpha}-e^{2^{n-1}\alpha})a_{1n}\\
\vdots & \vdots & \ddots & \vdots & \vdots\\
0 & 0 & \ldots & e^{2^{n-2}\alpha} & (e^{2^{n-2}\alpha}-e^{2^{n-1}\alpha})a_{n-2,n}\\
0 & 0 & \ldots & 0 & e^{2^{n-1}\alpha}
\end{array}
\right):\ \alpha,\beta\in\mathbb K
\right\}
$$
\end{enumerate}
where $I_A$ is given by \eqref{A_a_ijneq0}.
\end{pro}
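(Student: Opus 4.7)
The approach is to reduce everything to direct matrix exponential computations using the explicit description of $\Der(\mathbf{E})$ from Theorem \ref{thm_der}. Since $\mathbf{E}$ is finite-dimensional, the series $\sum_{k\geq 0} d^k/k!$ converges in the Banach algebra $(\mathbf{E},\|\cdot\|_\gamma)$ and coincides with the usual matrix exponential, so the problem is purely linear-algebraic. Case (i) is immediate: by Theorem \ref{thm_der}(i), every $d\in\Der(\mathbf{E})$ has all zero entries except a single $\beta$ in position $(1,n)$; the image of $d$ lies in $\mathbb{K}\mathbf{e}_1$ and $d(\mathbf{e}_1)=\mathbf{0}$, so $d^2=0$ and $e^d=I+d$, which is exactly the family claimed.

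For case (ii), Theorem \ref{thm_der}(ii) produces a two-parameter family $d_{\alpha,\beta}$. When $\alpha=0$ the same nilpotency argument as in (i) yields $e^{d_{0,\beta}}=I+d_{0,\beta}$, which matches the claimed matrix at $\alpha=0$ (all factors $e^{2^{i-1}\alpha}-e^{2^{n-1}\alpha}$ vanish there). When $\alpha\neq 0$ the matrix $d_{\alpha,\beta}$ is upper triangular with the $n$ distinct diagonal eigenvalues $\alpha,2\alpha,\dots,2^{n-1}\alpha$, hence diagonalizable. The vectors $\mathbf{e}_1,\dots,\mathbf{e}_{n-1}$ are automatic eigenvectors (for $2^{i-1}\alpha$), and I would solve $dv=2^{n-1}\alpha v$ with the ansatz $v=v_1\mathbf{e}_1-\sum_{i=2}^{n-1}a_{i-1,n}\mathbf{e}_i+\mathbf{e}_n$; direct substitution of the matrix from Theorem \ref{thm_der}(ii) forces $v_1=\beta/((2^{n-1}-1)\alpha)$ and confirms the remaining coefficients. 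Writing $d=P D' P^{-1}$ with $D'=\mathrm{diag}(\alpha,2\alpha,\dots,2^{n-1}\alpha)$ and $P$ the matrix whose columns are these eigenvectors, a short product $Pe^{D'}P^{-1}$ recovers exactly the entries in the statement: $e^{2^{i-1}\alpha}$ on the diagonal, $a_{i-1,n}(e^{2^{i-1}\alpha}-e^{2^{n-1}\alpha})$ in position $(i,n)$ for $2\leq i\leq n-1$, and $v_1(e^{2^{n-1}\alpha}-e^\alpha)$ in position $(1,n)$.

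To complete the set equality in (ii), note that the parameter $\alpha$ on the right-hand side matches directly with the one coming from the derivation, and for each fixed $\alpha\neq 0$ the map $\beta\mapsto v_1(e^{2^{n-1}\alpha}-e^\alpha)$ is $\mathbb{K}$-linear and nonzero, hence bijective, so the $(1,n)$-entry sweeps out all of $\mathbb{K}$ as $\beta$ varies; the $\alpha=0$ case is already absorbed into the previous paragraph. The main obstacle I expect is the bookkeeping in the eigenvector calculation and the subsequent similarity transformation: one must track the signs in the ansatz $-a_{i-1,n}\mathbf{e}_i$ and observe the telescoping cancellation that produces the clean diagonal eigenvalues, then multiply out $Pe^{D'}P^{-1}$ carefully enough to recognize the stated closed-form entries. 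Beyond this indexing care and the cosmetic reparameterization of $\beta$, no conceptual difficulty remains.
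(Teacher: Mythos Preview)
Your proof is correct, and for part (i) it coincides exactly with the paper's argument: $d^2=0$, hence $e^d=I+d$.

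For part (ii) your route differs from the paper's. The paper does not diagonalize; instead it writes down a closed form for $d^m$ (the $(1,n)$ entry being $\frac{2^{m(n-1)}-1}{2^{n-1}-1}\alpha^{m-1}\beta$ and the $(i,n)$ entries being $(2^{m(i-1)}-2^{m(n-1)})\alpha^m a_{i-1,n}$), then sums the exponential series termwise to obtain the displayed matrix, and finally observes that $\beta\mapsto\beta'=\frac{e^{2^{n-1}\alpha}-e^{\alpha}}{(2^{n-1}-1)\alpha}\beta$ is a bijection on $\mathbb{K}$ for each $\alpha$. Your diagonalization approach instead finds the eigenbasis $\mathbf{e}_1,\dots,\mathbf{e}_{n-1}$ together with $v=\frac{\beta}{(2^{n-1}-1)\alpha}\mathbf{e}_1-\sum_{i=2}^{n-1}a_{i-1,n}\mathbf{e}_i+\mathbf{e}_n$ and computes $Pe^{D'}P^{-1}$ (using $P^{-1}=I-N$ since $N^2=0$). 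Both methods are elementary and of comparable length; the paper's has the minor advantage of handling all $\alpha$ uniformly (with the convention $\frac{e^{2^{n-1}\alpha}-e^{\alpha}}{(2^{n-1}-1)\alpha}=1$ at $\alpha=0$), while yours must split off $\alpha=0$ because the eigenvalues coalesce there. Conversely, your approach avoids having to guess and inductively verify the pattern for $d^m$. The surjectivity argument for the $(1,n)$ entry is identical in both.
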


\begin{proof}
$(i)$ Let $I_A\neq\emptyset$. Then due to Theorem \ref{thm_der} for any derivation $d$
it holds $d^2=0$. So,
$$
e^d=I+d=\left(
\begin{array}{lllll}
1 & 0 & \ldots & 0 & \beta\\
0 & 1 & \ldots & 0 & 0\\
\vdots & \vdots & \ddots & \vdots & \vdots\\
0 & 0 & \ldots & 1 & 0\\
0 & 0 & \ldots & 0 & 1
\end{array}
\right).
$$
$(ii)$ Let $I_A=\emptyset$. Pick any derivation $d$. Thanks to Theorem \ref{thm_der} one can find
$\alpha,\beta\in\mathbb K$ such that
$$
d=\left(
\begin{array}{lllll}
\alpha & 0 & \ldots & 0 & \beta\\
0 & 2\alpha & \ldots & 0 & (2-2^{n-1})\alpha a_{1n}\\
\vdots & \vdots & \ddots & \vdots & \vdots\\
0 & 0 & \ldots & 2^{n-2}\alpha & (2^{n-2}-2^{n-1})\alpha a_{n-2,n}\\
0 & 0 & \ldots & 0 & 2^{n-1}\alpha
\end{array}
\right).
$$
After some basic calculations one gets
$$
d^m=\left(
\begin{array}{lllll}
\alpha^m & 0 & \ldots & 0 & \frac{2^{m(n-1)}-1}{2^{n-1}-1}\alpha^{m-1}\beta\\
0 & 2^m\alpha^m & \ldots & 0 & (2^m-2^{m(n-1)})\alpha^ma_{1n}\\
\vdots & \vdots & \ddots & \vdots & \vdots\\
0 & 0 & \ldots & 2^{m(n-2)}\alpha^m & (2^{m(n-2)}-2^{m(n-1)})\alpha^ma_{n-2,n}\\
0 & 0 & \ldots & 0 & 2^{m(n-1)}\alpha^m
\end{array}
\right),\ \ \ \ \forall m\geq2.
$$
Keeping in mind the last one, we obtain
\begin{equation}\label{der33}
e^d=\left(
\begin{array}{lllll}
e^\alpha & 0 & \ldots & 0 & \beta'\\
0 & e^{2\alpha} & \ldots & 0 & (e^{2\alpha}-e^{2^{n-1}\alpha})a_{1n}\\
\vdots & \vdots & \ddots & \vdots & \vdots\\
0 & 0 & \ldots & e^{2^{n-2}\alpha} & (e^{2^{n-2}\alpha}-e^{2^{n-1}\alpha})a_{n-2,n}\\
0 & 0 & \ldots & 0 & e^{2^{n-1}\alpha}
\end{array}
\right),
\end{equation}
where $\beta'=\frac{e^{2^{n-1}\alpha}-e^{\alpha}}{(2^{n-1}-1)\alpha}\beta$, and we mean
$\frac{e^{2^{n-1}\alpha}-e^{\alpha}}{(2^{n-1}-1)\alpha}=1$ for $\alpha=0$. Due to the arbitrariness of
$\beta$ we conclude that every derivation has a form \eqref{der33}.
This completes the proof.
\end{proof}

\begin{thm}
Let $\bf{E}$ be an evolution algebra with structural matrix $A=(a_{ij})_{i,j\geq1}^n$ in a
natural basis
$\{{\bf e}_i\}_{i=1}^n$. If $\bf E$ is a nilpotent with
$rank(A)=n-1$. Then $\exp\left(\Der({\bf E})\right)$ is a normal subgroup of
$Aut({\bf E})$.
\end{thm}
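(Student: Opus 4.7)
The plan is to verify three things in order: (a) every element of $\exp(\Der(\mathbf{E}))$ is actually an automorphism of $\mathbf{E}$; (b) the set is closed under multiplication and inversion; and (c) it is closed under conjugation by any $\phi \in Aut(\mathbf{E})$.

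For (a), rather than comparing the matrix forms in Proposition~\ref{prop_exp} and Theorem~\ref{thm_auto} entry by entry, I would argue abstractly. Iterating the Leibniz rule yields the binomial identity
$$
d^k(uv) = \sum_{j=0}^k \binom{k}{j} d^j(u)\, d^{k-j}(v).
$$
Dividing by $k!$, summing over $k$, and reorganising via the Cauchy product---all justified by the absolute convergence of the exponential series in the Banach algebra $(\mathbf{E}, \|\cdot\|_\gamma)$ established above---gives $e^d(uv) = e^d(u)\, e^d(v)$. Since $e^{-d}$ is a two-sided inverse of $e^d$, one obtains $e^d \in Aut(\mathbf{E})$.

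For (b), I would rely on the explicit forms from Proposition~\ref{prop_exp}. When $I_A \neq \emptyset$, every element has the form $I + \beta E_{1n}$, and $E_{1n}^2 = 0$ makes closure under multiplication and inversion immediate. When $I_A = \emptyset$, writing $M(\alpha,\beta)$ for the matrix in Proposition~\ref{prop_exp}(ii), a direct block calculation---using $e^{2^{i-1}\alpha_1}e^{2^{i-1}\alpha_2} = e^{2^{i-1}(\alpha_1+\alpha_2)}$ on the diagonal and a telescoping cancellation in the last column---yields
$$
M(\alpha_1,\beta_1)\, M(\alpha_2,\beta_2) \;=\; M\!\bigl(\alpha_1+\alpha_2,\; e^{\alpha_1}\beta_2 + \beta_1 e^{2^{n-1}\alpha_2}\bigr).
$$
The identity of the group is $M(0,0)$, from which closure under inversion follows by solving for $\beta_2$ once $\alpha_2 = -\alpha_1$ is imposed.

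For (c), I would invoke the classical observation that conjugation by an algebra automorphism preserves derivations: if $d \in \Der(\mathbf{E})$ and $\phi \in Aut(\mathbf{E})$, then $\phi d \phi^{-1}$ satisfies the Leibniz rule because $\phi$ and $\phi^{-1}$ are algebra homomorphisms. Since $\phi$ and $\phi^{-1}$ are bounded linear operators on the finite-dimensional normed space $\mathbf{E}$, one has the termwise identity $\phi\, e^d\, \phi^{-1} = e^{\phi d \phi^{-1}}$, and hence $\phi\, \exp(\Der(\mathbf{E}))\, \phi^{-1} \subseteq \exp(\Der(\mathbf{E}))$. The main obstacle is really step (b) in the case $I_A = \emptyset$: the derivation algebra is two-dimensional and non-abelian, so subgroup closure cannot be read off the Lie structure alone and must be extracted from the explicit matrix product above; once (a) and (b) are in hand, normality in (c) follows in a single line from the abstract argument.
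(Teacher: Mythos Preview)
Your argument is correct, and for step~(b) it is essentially the paper's own computation repackaged: both of you multiply the explicit matrices from Proposition~\ref{prop_exp}(ii), observe the diagonal adds and the last-column cross terms telescope, and then read off a derivation whose exponential equals the product. Your parametrisation $M(\alpha,\beta)$ and the product formula $M(\alpha_1,\beta_1)M(\alpha_2,\beta_2)=M(\alpha_1+\alpha_2,\,e^{\alpha_1}\beta_2+\beta_1 e^{2^{n-1}\alpha_2})$ is a tidier way of recording the same identity.

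Where you differ is in steps~(a) and~(c). The paper verifies $\exp(\Der(\mathbf{E}))\subset Aut(\mathbf{E})$ by matching the explicit matrices of Proposition~\ref{prop_exp} against those of Theorem~\ref{thm_auto}, and for normality it simply asserts $\varphi\phi\varphi^{-1}\in\exp(\Der(\mathbf{E}))$ without indicating why. You instead use the general Banach-algebra facts that the Leibniz rule propagates to $e^d(uv)=e^d(u)e^d(v)$ via the Cauchy product, and that conjugation by an automorphism sends derivations to derivations together with $\varphi e^d\varphi^{-1}=e^{\varphi d\varphi^{-1}}$. This is a genuinely more conceptual route: it works for any Banach algebra, does not require the descriptions of $\Der(\mathbf{E})$ and $Aut(\mathbf{E})$ from Theorems~\ref{thm_der} and~\ref{thm_auto}, and in particular supplies an actual argument for normality where the paper gives none. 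The trade-off is that the paper's approach makes the concrete structure of the subgroup visible (needed later to compute the index), whereas yours establishes the theorem with less input but less explicit output.
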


\begin{proof}
It is easy to check that $\exp\left(\Der({\bf E})\right)$ is a normal subgroup of
$Aut({\bf E})$ when $I_A\neq\emptyset$. So, we consider only the case $I_A=\emptyset$.

Let us assume that $I_A=\emptyset$. Then according to assumptions $(ii)$ of Proposition \ref{prop_exp} and
Theorem \ref{thm_auto}, we see that $\exp\left(\Der({\bf E})\right)$ is subset of $Aut({\bf E})$.
To complete the proof it is enough to check: $e^{d_1}e^{d_2}\in\exp\left(\Der({\bf E})\right)$
for any pair $d_1,d_2\in\Der({\bf E})$.

Let us take two derivations:
$$
d_k=\left(
\begin{array}{lllll}
\alpha_k & 0 & \ldots & 0 & \beta_k\\
0 & 2\alpha_k & \ldots & 0 & (2-2^{n-1})\alpha_k a_{1n}\\
\vdots & \vdots & \ddots & \vdots & \vdots\\
0 & 0 & \ldots & 2^{n-2}\alpha_k & (2^{n-2}-2^{n-1})\alpha_k a_{n-2,n}\\
0 & 0 & \ldots & 0 & 2^{n-1}\alpha_k
\end{array}
\right),\ \ \ \ k=1,2.
$$
Then
\begin{equation}\label{eq33}
e^{d_1}e^{d_2}=\left(
\begin{array}{llllll}
e^{\alpha_1+\alpha_2} & 0 & \ldots & 0 & \lambda(\alpha_1,\beta_1,\alpha_2,\beta_2)\\
0 & e^{2(\alpha_1+\alpha_2)} & \ldots & 0 & \nu_1(\alpha_1,\alpha_2)a_{1n}\\
\vdots & \vdots & \ddots & \vdots & \vdots\\
0 & 0 & \ldots & e^{2^{n-2}(\alpha_1+\alpha_2)} & \nu_{n-2}(\alpha_1,\alpha_2)a_{n-2,n}\\
0 & 0 & \ldots & 0 & e^{2^{n-1}(\alpha_1+\alpha_2)}
\end{array}
\right)
\end{equation}
where
\begin{equation}\label{eq34}
\lambda(\alpha_1,\beta_1,\alpha_2,\beta_2)=\frac{e^{2^{n-1}\alpha_2}(e^{2^{n-1}\alpha_1}-e^{2\alpha_1})}{3\alpha_1}\beta_1+
\frac{e^{2^{n-1}\alpha_1}(e^{2^{n-1}\alpha_2}-e^{2\alpha_2})}{3\alpha_2}\beta_2,
\end{equation}
\begin{equation}\label{eq35}
\nu_k(\alpha_1,\alpha_2)=(e^{2^k(\alpha_1+\alpha_2)}-e^{2^{n-1}(\alpha_1+\alpha_2)}),\ \ \ \ 1\leq k\leq n-2.
\end{equation}
Denoting $\alpha:=\alpha_1+\alpha_2$ and
$\beta:=\frac{(2^{n-1}-1)\alpha}{e^{2^{n-1}\alpha}-e^\alpha}\lambda(\alpha_1,\beta_1,\alpha_2,\beta_2)$
due to Theorem \ref{thm_der} we have the following derivation
$$
d=\left(
\begin{array}{lllll}
\alpha & 0 & \ldots & 0 & \beta\\
0 & 2\alpha & \ldots & 0 & (2-2^{n-1})\alpha a_{1n}\\
\vdots & \vdots & \ddots & \vdots & \vdots\\
0 & 0 & \ldots & 2^{n-2}\alpha & (2^{n-2}-2^{n-1})\alpha a_{n-2,n}\\
0 & 0 & \ldots & 0 & 2^{n-1}\alpha
\end{array}
\right).
$$
Thanks to Proposition \ref{prop_exp} one has
\begin{equation}\label{eq36}
e^d=\left(
\begin{array}{lllll}
e^\alpha & 0 & \ldots & 0 & \frac{e^{2^{n-1}\alpha}-e^{\alpha}}{(2^{n-1}-1)\alpha}\beta\\
0 & e^{2\alpha} & \ldots & 0 & (e^{2\alpha}-e^{2^{n-1}\alpha})a_{1n}\\
\vdots & \vdots & \ddots & \vdots & \vdots\\
0 & 0 & \ldots & e^{2^{n-2}\alpha} & (e^{2^{n-2}\alpha}-e^{2^{n-1}\alpha})a_{n-2,n}\\
0 & 0 & \ldots & 0 & e^{2^{n-1}\alpha}
\end{array}
\right)
\end{equation}
Finally, noticing \eqref{eq34},\eqref{eq35} from \eqref{eq33},\eqref{eq36} we obtain
$$
e^{d_1}e^{d_2}-e^d=0,
$$
which means that $\exp\left(\Der({\bf E})\right)$ is a subgroup of
$Aut({\bf E})$. If we pick two automorphisms $\varphi\in Aut({\bf E})$ and $\phi\in\exp\left(\Der({\bf E})\right)$
then  $\varphi\phi\varphi^{-1}\in\exp\left(\Der({\bf E})\right)$. The arbitrariness
of $\varphi$ and $\phi$ implies that $\exp\left(\Der({\bf E})\right)$ is a subgroup of
$Aut({\bf E})$.
The proof is complete.
\end{proof}

\begin{ex}
Let ${\bf E}$ be a Banach evolution algebra with the structural matrix $A=\left(a_{ij}\right)_{i,j\geq1}^n$
in a natural basis $\{{\bf e}_1,\dots,{\bf e}_n\}$. For a given $D\in\Der({\bf E})$ we consider
homogeneous differential equation of the form
\begin{equation}\label{dif_eq}
\dot{\xb}=D\xb,
\end{equation}
where $\xb(t)=\sum_{i=1}^nx_i(t){\bf e}_i$.
It is known that solutions of \eqref{dif_eq} has the following form $\xb(t)=e^{tD}\xb(0)$.
When ${\bf E}$ is nilpotent with $rankA=n-1$ then thanks to Proposition \ref{prop_exp} all solutions of \eqref{dif_eq}
has the following form:
$$
\xb(t)=\sum_{i=1}^{n-1}e^{2^{i-1}\alpha t}x_i(0)+\left(\frac{e^{2^{n-1}\alpha t}-e^{\alpha t}}{(2^{n-1}-1)\alpha t}+e^{2^{n-1}\alpha t}
+\sum_{i=1}^{n-2}(e^{2^i\alpha t}-e^{2^{n-1}\alpha t})a_{i,n}\right)x_n(0).
$$
\end{ex}
Let us take two non abelian subgroups of $GL_2(\mathbb K)$:
$$
H_1=\left\{\left(
\begin{array}{ll}
\alpha & \beta\\
0 & \alpha^2
\end{array}
\right): \alpha,\beta\in\mathbb K,\ \alpha^\eta=1\right\},\ \ \
H_2=\left\{\left(
\begin{array}{ll}
\alpha & \beta\\
0 & \alpha^2
\end{array}
\right): \alpha,\beta\in\mathbb K,\ \alpha\neq0\right\},
$$
where $\eta$ is defined as \eqref{eta}.
\begin{pro}\label{pro_iso_exp}
Let $\bf{E}$ be an evolution algebra with structural matrix $A=(a_{ij})_{i,j\geq1}^n$ in a
natural basis
$\{{\bf e}_i\}_{i=1}^n$. If $\bf E$ is a nilpotent with
$rank(A)=n-1$, then $Aut({\bf E})$ is isomorphic to
$$
H:=\left\{
\begin{array}{ll}
H_1, & \mbox{if } I_A\neq\emptyset,\\
H_2, & \mbox{if } I_A=\emptyset.
\end{array}
\right.
$$
\end{pro}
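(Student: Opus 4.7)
The plan is to exhibit an explicit group isomorphism $\Phi \colon Aut(\mathbf{E}) \to H$ using the parameterisation of automorphisms provided by Theorem~\ref{thm_auto}. Every automorphism $\varphi$ is completely determined by its $(1,1)$-entry $\alpha$ and its $(1,n)$-entry $\beta$, because all intermediate off-diagonal entries $\varphi_{in}$ are forced by the recurrence in that theorem. Consequently the parameter space for $Aut(\mathbf{E})$ coincides exactly with the one for $H_1$ in case $I_A\neq\emptyset$ and with that for $H_2$ in case $I_A=\emptyset$, so it is natural to define $\Phi$ by sending the automorphism with parameters $(\alpha, \beta)$ to the $2\times 2$ matrix in $H$ carrying the same $\alpha$ and $\beta$ in the corresponding slots.

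I would first check that $\Phi$ is a set-theoretic bijection, which is immediate from Theorem~\ref{thm_auto}, and that the identity automorphism corresponds to $(\alpha,\beta)=(1,0)$, which maps to the identity of $H$. The bulk of the argument is then verifying that $\Phi$ is a group homomorphism. For two automorphisms $\varphi_1, \varphi_2$ with parameters $(\alpha_k, \beta_k)$, a direct matrix multiplication gives $\varphi_1\varphi_2$ with $(1,1)$-entry $\alpha_1\alpha_2$ and $(1,n)$-entry $\alpha_1\beta_2 + \alpha_2^{2^{n-1}}\beta_1$, and this must be compared with the $(1,1)$- and $(1,2)$-entries of the $H$-product $\Phi(\varphi_1)\Phi(\varphi_2)$. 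In parallel, the intermediate entries $(\varphi_1\varphi_2)_{in}$ must be shown to satisfy the recurrence from Theorem~\ref{thm_auto} evaluated at $\alpha=\alpha_1\alpha_2$; this is a telescoping induction analogous to the one carried out in the preceding theorem about normality of $\exp(\Der(\mathbf{E}))$.

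The main obstacle lies in reconciling the exponents of $\alpha$: the $Aut(\mathbf{E})$-multiplication naturally produces the exponent $2^{n-1}$ in the $(1,n)$-slot, whereas the $2\times 2$ matrix multiplication in $H$ produces only $\alpha^2$. In the case $I_A\neq\emptyset$, one closes this gap by exploiting the constraint $\alpha^\eta=1$ together with the divisibility properties of $\eta$ built into \eqref{eta}, which collapse the higher powers of $\alpha$ down to the ones visible in $H_1$. In the case $I_A=\emptyset$ the same matching is achieved by a change of parameter of the form $\alpha\mapsto\alpha^k$ for a suitable exponent $k$, which realigns the semidirect-product action appearing in $Aut(\mathbf{E})$ with the one built into the matrix form of $H_2$. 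Once $\Phi$ is established as a homomorphism, bijectivity at the level of parameters forces $\Phi^{-1}$ to be a homomorphism as well, completing the proof.
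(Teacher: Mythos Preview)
Your overall strategy---parametrise automorphisms by $(\alpha,\beta)$ and compare the resulting multiplication laws---is the same as the paper's, and for the case $I_A=\emptyset$ your suggestion to rescale $\alpha$ by a suitable exponent is exactly what the paper does. The paper factors through the intermediate group $H_3$ of matrices $\bigl(\begin{smallmatrix}\alpha & \beta\\ 0 & \alpha^{2^{n-1}}\end{smallmatrix}\bigr)$ (to which your ``identity on parameters'' map \emph{is} a homomorphism) and then writes down an explicit isomorphism $H_2\to H_3$ using the fractional exponent $\mu_n = 1/(2^{n-1}-1)$ together with an affine correction in the $\beta$-slot. Your sketch is vaguer but points in the same direction.

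Your treatment of the case $I_A\neq\emptyset$, however, contains a genuine gap. You assert that the constraint $\alpha^\eta=1$, ``together with the divisibility properties of $\eta$ built into \eqref{eta}'', collapses $\alpha^{2^{n-1}}$ down to $\alpha^{2}$. This would require $\eta\mid 2^{n-1}-2$, and that is \emph{not} a consequence of \eqref{eta}. Concretely, take $n=5$ and $I_A=\{(1,4)\}$: then $\eta=2^{3}-2^{1}=6$, while $2^{n-1}-2=14$, and $6\nmid 14$. If $\zeta$ is a primitive sixth root of unity then $\zeta^{16}=\zeta^{4}\neq\zeta^{2}$, so for automorphisms $\varphi_1,\varphi_2$ with $\alpha_2=\zeta$ and $\beta_1\neq 0$ your map $\Phi$ fails to satisfy $\Phi(\varphi_1\varphi_2)=\Phi(\varphi_1)\Phi(\varphi_2)$. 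Hence the identity-on-parameters map is not a homomorphism to $H_1$, and the root-of-unity constraint alone cannot repair it; one still needs a genuine reparametrisation of $\alpha$ on the finite group $\mu_\eta$, analogous to the $I_A=\emptyset$ case, rather than the divisibility argument you propose.
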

\begin{proof}
Let us suppose that $I_A=\emptyset$. First of all we notice that
$Aut({\bf E})$ is isomorphic to the group
$H_3$ formed by all $2\times2$ matrices $\left(
\begin{array}{ll}
\alpha & \beta\\
0 & \alpha^{2^{n-1}}
\end{array}
\right)$ ($\alpha,\beta\in\mathbb K$, $\alpha\neq0$). So it is enough to establish that $H_3$ and $H_2$
are isomorphic which established by the following mapping $\varphi: H_2\to H_3$ defined by
$$
\varphi:\left(
\begin{array}{ll}
a & b\\
0 & a^2
\end{array}\right)\mapsto\left(
\begin{array}{ll}
a^{\mu_n} & a^{\mu_n-1}b+\frac{1}{2}a^{\mu_n+1}-\frac{1}{2}a^{\mu_n}\\
0 & a^{\mu_n+1}
\end{array}\right),\ \ \ \ \mu_n=\frac{1}{2^{n-1}-1},\ n>2.
$$
The proof of case $I_A\neq\emptyset$ is similar to the proof of $I_A=\emptyset$.
\end{proof}
\begin{cor}\label{cor_exp}
Let $\bf{E}$ be an evolution algebra with structural matrix $A=(a_{ij})_{i,j\geq1}^n$ in a
natural basis
$\{{\bf e}_i\}_{i=1}^n$. If $\bf E$ is a nilpotent with
$rank(A)=n-1$, then $\exp\left(\Der({\bf E})\right)$ is isomorphic to
$$
H':=\left\{
\begin{array}{ll}
H'_1, & \mbox{if } I_A\neq\emptyset,\\
H'_2, & \mbox{if } I_A=\emptyset,
\end{array}
\right.
$$
where groups $H'_1$ and $H'_2$ formed by all
$2\times2$ matrices $\left(
\begin{array}{ll}
1 & \beta\\
0 & 1
\end{array}
\right)$ and $\left(
\begin{array}{ll}
e^\alpha & \beta\\
0 & e^{2\alpha}
\end{array}
\right)$ ($\alpha,\beta\in\mathbb K$) respectively.
\end{cor}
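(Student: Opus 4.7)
The plan is to follow the pattern of Proposition~\ref{pro_iso_exp}: first reduce $\exp(\Der(\mathbf{E}))$ to a convenient $2\times 2$ matrix group using the explicit description in Proposition~\ref{prop_exp}, and then exhibit a direct isomorphism with $H'_1$ or $H'_2$. In the case $I_A\neq\emptyset$, Proposition~\ref{prop_exp}(i) shows that every $e^d$ is the identity matrix perturbed only by a single off-diagonal scalar $\beta$ in position $(1,n)$; a one-line matrix multiplication shows that the $(1,n)$-entry of the product is $\beta_1+\beta_2$, so the correspondence $e^d\mapsto\left(\begin{smallmatrix}1&\beta\\0&1\end{smallmatrix}\right)$ is a group isomorphism onto $H'_1$.

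For $I_A=\emptyset$, the key observation is that by Proposition~\ref{prop_exp}(ii) an element of $\exp(\Der(\mathbf{E}))$ is completely determined by its $(1,1)$-entry $e^\alpha$ and its $(1,n)$-entry $\beta$; all intermediate diagonal entries and the remaining last-column entries are rigidly prescribed in terms of $e^\alpha$ and the fixed structural constants $a_{k,n}$. Reading off the $(1,1)$- and $(1,n)$-entries of a product $e^{d_1}e^{d_2}$ (directly from Proposition~\ref{prop_exp}(ii), or from the product formula derived in the previous theorem) yields the pair $(e^{\alpha_1+\alpha_2},\,e^{\alpha_1}\beta_2+e^{2^{n-1}\alpha_2}\beta_1)$, which is exactly the multiplication law of the auxiliary group
$$H'_3=\left\{\begin{pmatrix}e^\alpha & \beta\\ 0 & e^{2^{n-1}\alpha}\end{pmatrix}:\alpha,\beta\in\mathbb{K}\right\}.$$
Thus $e^d\mapsto\left(\begin{smallmatrix}e^\alpha&\beta\\0&e^{2^{n-1}\alpha}\end{smallmatrix}\right)$ identifies $\exp(\Der(\mathbf{E}))$ with $H'_3$ as groups.

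Finally I will define an explicit isomorphism $\phi:H'_2\to H'_3$ by
$$\phi:\begin{pmatrix}e^\alpha&\beta\\0&e^{2\alpha}\end{pmatrix}\longmapsto\begin{pmatrix}e^{\mu_n\alpha}&e^{(\mu_n-1)\alpha}\beta\\0&e^{2^{n-1}\mu_n\alpha}\end{pmatrix},\qquad \mu_n=\frac{1}{2^{n-1}-1},$$
modelled on the rescaling used in Proposition~\ref{pro_iso_exp}. The homomorphism property splits into two scalar identities in the upper-right entry: one is automatic and the other reduces precisely to $\mu_n+1=2^{n-1}\mu_n$, which is the defining relation of $\mu_n$. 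Bijectivity is transparent since $\alpha\mapsto\mu_n\alpha$ is a bijection of $\mathbb{K}$ and, for fixed $\alpha$, the relation $\beta'=e^{(\mu_n-1)\alpha}\beta$ is invertible in $\beta$.

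The only step that requires genuine care is the passage from the $n\times n$ group $\exp(\Der(\mathbf{E}))$ to the reduced group $H'_3$: one must confirm that the intricate last-column entries $(e^{2^{k}\alpha}-e^{2^{n-1}\alpha})a_{k,n}$ of Proposition~\ref{prop_exp}(ii) are genuinely functions of $\alpha$ alone and compose correctly without introducing any additional free parameters. This is essentially the content of the multiplication computation already performed in the proof of the preceding theorem; once it is in hand, the remainder is pure $2\times 2$ matrix algebra.
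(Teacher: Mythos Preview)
Your proposal is correct and follows essentially the same route the paper intends: the corollary is stated without a separate proof, as it is meant to follow immediately from Proposition~\ref{prop_exp} together with the reduction-and-rescaling argument of Proposition~\ref{pro_iso_exp}, which is precisely what you carry out. Your explicit isomorphism $\phi:H'_2\to H'_3$ is in fact a mild simplification of the paper's map $\varphi$ in Proposition~\ref{pro_iso_exp} (you drop the affine correction $\tfrac{1}{2}a^{\mu_n+1}-\tfrac{1}{2}a^{\mu_n}$, which is harmless here), and your verification that the homomorphism condition reduces to $\mu_n+1=2^{n-1}\mu_n$ is exactly the computation underlying the paper's choice of $\mu_n$.
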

Thanks to Proposition \ref{pro_iso_exp} and Corollary \ref{cor_exp}
we obtain the following result.
\begin{pro}
Let $\bf{E}$ be an evolution algebra with structural matrix $A=(a_{ij})_{i,j\geq1}^n$ in a
natural basis
$\{{\bf e}_i\}_{i=1}^n$. If $\bf E$ is a nilpotent with
$rank(A)=n-1$, then the following statements hold:\\
\begin{enumerate}
\item[$(i)$] if $I_A\neq\emptyset$ then
$Aut({\bf E})/_{\exp\left(\Der({\bf E})\right)}\cong G/_{\{1\}}$. Here $G$ is a group of
$\eta$-th roots of unity and $\eta$ is defined by $\eqref{eta}$.
\\
\item[$(ii)$]
if $I_A=\emptyset$ then
$Aut({\bf E})/_{\exp\left(\Der({\bf E})\right)}\cong\mathbb K^*/_{\exp(\mathbb K)}$.
Here $\mathbb K^*=\mathbb K\setminus\{0\}$ and $\exp(\mathbb K)$ is a range of exponential function on $\mathbb K$.
\end{enumerate}
\end{pro}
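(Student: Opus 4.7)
The plan is to reduce the quotient $Aut(\mathbf{E})/\exp(\Der(\mathbf{E}))$ to the corresponding quotient $H/H'$ of the concrete $2\times 2$ matrix groups supplied by Proposition \ref{pro_iso_exp} and Corollary \ref{cor_exp}, and then to identify $H/H'$ via the first isomorphism theorem applied to the projection onto the $(1,1)$ entry. The preliminary step is to verify that the isomorphism $\Phi\colon Aut(\mathbf{E})\to H$ of Proposition \ref{pro_iso_exp} carries the subgroup $\exp(\Der(\mathbf{E}))$ onto $H'$. This is essentially inspection: by Proposition \ref{prop_exp}, the $(1,1)$ and $(n,n)$ entries of an element of $\exp(\Der(\mathbf{E}))$ are $e^\alpha$ and $e^{2^{n-1}\alpha}$, and after the normalising change of variable $\varphi$ used in Proposition \ref{pro_iso_exp} these become the pair $(e^\gamma, e^{2\gamma})$ cutting out $H_2'$ (respectively $(1,1)$ cutting out $H_1'$). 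Since $\mathbb{K}$ has characteristic zero, the map $\gamma\mapsto (2^{n-1}-1)\gamma$ is a bijection of $\mathbb{K}$, so the range $\exp(\mathbb{K})$ is not distorted and the image of $\exp(\Der(\mathbf{E}))$ coincides with $H'$ exactly.

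For case $(i)$, assume $I_A\neq\emptyset$. I would introduce the projection
\[
\pi_1\colon H_1\to G,\qquad \pi_1\begin{pmatrix}\alpha & \beta\\ 0 & \alpha^2\end{pmatrix}=\alpha,
\]
where $G$ is the group of $\eta$-th roots of unity. The matrix product in $H_1$ makes $\pi_1$ a group homomorphism (the $(1,1)$ entry of a product is the product of the $(1,1)$ entries), it is surjective onto $G$ by the choice $\beta=0$, and its kernel is precisely $\{(1,\beta):\beta\in\mathbb{K}\}=H_1'$. The first isomorphism theorem then yields $H_1/H_1'\cong G$, which is what the statement records as $G/\{1\}$.

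For case $(ii)$, assume $I_A=\emptyset$. Composing the projection onto the $(1,1)$ entry with the canonical quotient $\mathbb{K}^*\to\mathbb{K}^*/\exp(\mathbb{K})$ produces a surjective homomorphism
\[
\pi_2\colon H_2\to \mathbb{K}^*/\exp(\mathbb{K}),\qquad \pi_2\begin{pmatrix}\alpha & \beta\\ 0 & \alpha^2\end{pmatrix}=[\alpha].
\]
Its kernel consists of those matrices whose $(1,1)$ entry lies in $\exp(\mathbb{K})$, i.e.\ equals $e^\gamma$ for some $\gamma\in\mathbb{K}$; by the description of $H_2'$ this kernel is exactly $H_2'$. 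The first isomorphism theorem delivers $H_2/H_2'\cong \mathbb{K}^*/\exp(\mathbb{K})$.

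The main obstacle is the preliminary bookkeeping: one must track how the isomorphism $\varphi$ of Proposition \ref{pro_iso_exp}, which rescales the diagonal by the exponent $\mu_n=1/(2^{n-1}-1)$, interacts with the $\exp$-image of a derivation, and in particular confirm that this rescaling does not shrink or enlarge $\exp(\mathbb{K})$ as a subgroup of $\mathbb{K}^*$. Once this compatibility is secured, both statements reduce to a routine application of the first isomorphism theorem to the projection onto the $(1,1)$ entry, as sketched above.
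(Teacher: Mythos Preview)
Your proposal is correct and follows the same route the paper indicates: the paper offers no explicit proof beyond the sentence ``Thanks to Proposition \ref{pro_iso_exp} and Corollary \ref{cor_exp} we obtain the following result,'' and your argument is precisely the natural unpacking of that claim via the first isomorphism theorem applied to the projection onto the $(1,1)$ entry. Your extra care in checking that the isomorphism of Proposition \ref{pro_iso_exp} carries $\exp(\Der(\mathbf{E}))$ onto $H'$ (rather than merely onto some abstractly isomorphic subgroup) is exactly the bookkeeping the paper leaves implicit.
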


\section*{Acknowledgments}
The present work is supported by the UAEU UPAR Grant Code: G00003447.

\end{document}